\newcommand{\Q}{\mathbb Q}
\newcommand{\Z}{\mathbb Z}
\newcommand{\C}{\mathbb C}
\newcommand{\GL}{\mathrm{GL}}
\newcommand{\A}{\mathbb A}
\newcommand{\N}{\mathbb N}
\newcommand{\Cl}{\mathrm{Cl}}
\newcommand{\calO}{\mathcal O}
\newcommand{\calI}{\mathcal I}
\newcommand{\calE}{\mathcal E}
\newcommand{\calM}{\mathcal M}
\newcommand{\calS}{\mathcal S}
\newcommand{\frakN}{\mathfrak N}
\newcommand{\frakp}{\mathfrak p}
\newcommand{\frakq}{\mathfrak q}
\newcommand{\frako}{\mathfrak o}
\newcommand{\num}{\mathrm{num}}
\newcommand{\bmx}{\begin{pmatrix}}
\newcommand{\emx}{\end{pmatrix}}
\renewcommand{\mod}{\, \, \mathrm{mod} \, \,}
\def\vol{{\rm vol}}
\def\alg{{\rm alg}}
\def\bs{{\backslash}}
\newtheorem{lem}{Lemma}
\numberwithin{lem}{section}
\newtheorem{prop}[lem]{Proposition}
\newtheorem{thm}[lem]{Theorem}
\newtheorem{rem}[lem]{Remark}
\newtheorem{ex}[lem]{Example}
\newtheorem*{thma}{Theorem A}
\newtheorem*{thmb}{Theorem B}
\newtheorem*{corc}{Corollary 1}
\newtheorem*{cord}{Corollary 2}
\newtheorem*{exe}{Example 3}
\numberwithin{equation}{section}
\begin{document}

\title{The Jacquet--Langlands correspondence, Eisenstein congruences,
and integral $L$-values in weight 2}

\author{Kimball Martin\footnote{Department of Mathematics, University of
Oklahoma, Norman, OK 73019}}

\date{\today\footnote{This version is a corrected and annotated 
version of the published version.  Annotations (including explanations of primary
corrections) are made in footnotes.
The main corrections are the additional hypotheses that $p \nmid h_F$ and $h_F$
is odd in \cref{thm1}.  We thank Jack Shotton for pointing out the error when
$p | h_F$.  While one can remove these hypotheses with some modifications
to the argument, in order to minimize changes to this document 
we defer such explanations to an upcoming joint paper with Satoshi
Wakatsuki.  The results in the introduction are unchanged.}}

\maketitle

\begin{abstract}
We use the Jacquet--Langlands correspondence to
generalize well-known congruence results of Mazur on Fourier coefficients and
$L$-values of elliptic modular forms for 
prime level in weight 2 both to nonsquare level and to Hilbert modular forms.
\end{abstract}

A celebrated result of Mazur says that, for $N$ a prime and $p$ a prime dividing
(the numerator of) $\frac{N-1}{12}$, there exists a cusp form $f \in S_2(N) = S_2(\Gamma_0(N))$ congruent
to the Eisenstein series $E_{2,N}$ of weight 2 and level $N$ mod $p$ \cite[II(5.12)]{mazur1}.  Further, if $p \ne 2$, 
one has a congruence
for the algebraic part of the  central $L$-value $L(1, f_{K})=L(1, f) L(1, f \otimes
\eta_K)$, where $\eta_K$ is the quadratic character associated to a quadratic field $K/\Q$ \cite{mazur2}.  For instance, if $N=11$ and $K$ is not split at 11, there is one
cusp form $f \in S_2(N)$ and one gets that $L^{\alg}(1, f_K) \not \equiv 0 \mod 5$
if and only if $5 \nmid h_K$.  (If $K$ splits at 11, the root number
is $-1$ so $L(1, f_K) = 0)$.
A form of Mazur's $L$-value result was reproved by Gross \cite{gross} for $K/\Q$ imaginary quadratic using quaternion algebras and the height pairing, whereas Mazur used modular symbols. 
Ramakrishnan pointed out to me
that one can also deduce this from his average $L$-value formula with 
Michel \cite{michel-ramakrishnan}. (Note Gross's argument also involves an
averaging type procedure, so these two arguments are not entirely different in spirit.)

In this article, we use the Jacquet--Langlands correspondence and an explicit 
$L$-value formula to extend these results of Mazur both to more general levels and to parallel weight 2 Hilbert modular forms over a totally real field $F$ with $K/F$ a quadratic 
CM extension.
For simplicity, we only state our results precisely for $F=\Q$ in this introduction.

We first discuss the Hecke eigenvalue congruence result and a nonvanishing
$L$-value result, and will state the more precise result
on  $L$-value congruences below in \hyperlink{thmb}{Theorem B}.

\begin{thma} \hypertarget{thma}
 Let $N$ be a nonsquare, and write $N=N_1N_2$ where $(N_1, N_2) = 1$
and $N_1$ has an odd number of prime factors, all of which occur to odd exponents.
Let $p$
be a prime dividing (the numerator of) $\frac 1{12} {\varphi(N_1)N_2} \prod_{q | N_2} (1+q^{-1})$ and $\frakp$ a prime of $\overline \Q$ above $p$.  
Then there exists $f \in S_2(N)$ which is an eigenform for all $T_n$ with $(n,N) = 1$ and 
 is congruent to the
Eisenstein series $E_{2,N} \mod \frakp$ away from $N$.

Suppose moreover $N=N_1$ with $N_1$ squarefree and that $p | \frac{\varphi(N_1)}{24}$.  
Then we can take $f$ to be a newform such that $f \equiv E_{2,N} \mod \frakp$
(as $q$-expansions).
If $K/\Q$ is an imaginary quadratic field not split at any prime dividing $N$ such that
$p \nmid h_K$, then there exists such an $f$ with $L(1,f_K) \ne 0$.
\end{thma}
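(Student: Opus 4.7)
The plan is to work on the Jacquet--Langlands transfer of the problem to the definite quaternion algebra $B/\Q$ ramified precisely at $\infty$ and the primes dividing $N_1$; the parity condition on $N_1$ ensures such a $B$ exists. Fix an Eichler order $\calO \subset B$ of level $N_2$, let $X = B^\times \bs \widehat B^\times / \widehat \calO^\times$ be the associated finite double coset space, and let $M = \Z[X]$ be the Hecke module. Inside $M_{\Q}$ the \emph{Eisenstein vector} $e$, with $x$-component $1/w_x$ (where $w_x$ is the order of the corresponding unit group modulo $\pm 1$), spans a Hecke eigenline with $T_\ell e = (\ell+1)e$ for $\ell \nmid N$; its orthogonal complement $M^0$ is identified under JL with the newform subspace of $S_2(N)$ whose Atkin--Lehner sign at each $\ell \mid N_1$ matches that of $E_{2,N}$.

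Producing $f$ congruent to $E_{2,N}$ mod $\frakp$ away from $N$ then reduces to showing that the Eisenstein ideal $I = (T_\ell - (\ell+1) : \ell \nmid N)$ fails to act invertibly on $M^0 \otimes \Z_p$. By Eichler's mass formula,
\[
\sum_{x \in X} 1/w_x \;=\; \tfrac{1}{12}\varphi(N_1)\,N_2 \prod_{q \mid N_2}(1+q^{-1}),
\]
which is precisely the quantity whose numerator is divisible by $p$. A standard Mazur-style argument --- comparing the $\frakp$-index of $\Z e + M^0$ in $M$ to this mass --- then produces $\phi \in M^0$ with $T_\ell \phi \equiv (\ell+1)\phi \mod \frakp$ for every $\ell \nmid N$, and JL transfers $\phi$ to the desired $f$. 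When $N = N_1$ is squarefree, $\calO$ is maximal and $f$ is a newform of level $N_1$ with $w_\ell f = -f$ for each $\ell \mid N_1$; then $a_\ell(f) = 1 = a_\ell(E_{2,N_1})$ at such $\ell$, and together with the eigenvalue congruence at $\ell \nmid N_1$ this upgrades to the $q$-expansion congruence $f \equiv E_{2,N} \mod \frakp$. (The tightening from $p \mid \varphi(N_1)/12$ to $p \mid \varphi(N_1)/24$ corresponds to working in the $(-1)$-eigenspace of each $w_\ell$ where the Eisenstein vector actually lies.)

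For the nonvanishing of $L(1, f_K)$: the hypothesis that $K$ is not split at any $\ell \mid N_1$ yields an embedding $K \hookrightarrow B$, and hence a finite set $\mathrm{CM}_K \subset X$ of CM points that is a principal $\Cl(K)$-torsor. By the Waldspurger/Gross-type toric period formula (the weight-$2$ definite case of the identity underlying \hyperlink{thmb}{Theorem B}), $L(1,f_K)=0$ iff the toric period $P_K(\phi) := \sum_{x \in \mathrm{CM}_K} \phi(x)$ vanishes. Since $\phi \equiv e \mod \frakp$ as Hecke eigenvectors, $P_K(\phi) \equiv P_K(e) \mod \frakp$, and $P_K(e) = \sum_{x \in \mathrm{CM}_K} 1/w_x$ is essentially $h_K$ (up to the small factor $|\calO_K^\times/\{\pm 1\}|$); the assumption $p \nmid h_K$ therefore forces $P_K(\phi) \not\equiv 0 \mod \frakp$, hence $L(1, f_K) \ne 0$.

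I expect the main difficulty to be twofold: (i) running the Eisenstein-ideal step integrally when $N_2 > 1$, so that the factors $1 + q^{-1}$ in the mass formula are correctly accounted for in the index calculation (this is where JL does real work beyond Mazur's prime-level setup); and (ii) controlling the constants in the Gross--Waldspurger period formula precisely enough that a $\frakp$-adic congruence between quaternionic forms really transfers to a genuine nonvanishing statement for the complex $L$-value $L(1, f_K)$.
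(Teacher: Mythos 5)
Your overall route is the same as the paper's (transfer to the definite quaternion algebra ramified at the primes of $N_1$ with an Eichler-type level $N_2$, use the Eichler mass formula to force an Eisenstein congruence on the class set, move back via Jacquet--Langlands, and get nonvanishing from the toric-period formula), but two steps as written have genuine gaps. The most serious one is in the $L$-value step: you write ``since $\phi \equiv e \bmod \frakp$ \emph{as Hecke eigenvectors}, $P_K(\phi) \equiv P_K(e) \bmod \frakp$.'' The period $P_K$ is a sum of \emph{values} of $\phi$ on CM classes, so the congruence you need is an entrywise congruence $\phi(x) \equiv c\, e(x) \bmod \frakp$ on $\Cl(\calO)$, and deducing that from a congruence of Hecke eigenvalues is exactly the unproven implication \eqref{eq:hyp} that the paper explicitly says it cannot establish in general. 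If instead your $\phi$ is the element of $M^0$ produced by the index/mass argument (which \emph{is} entrywise congruent to the Eisenstein vector), then it is in general not an eigenform, and the statement ``$L(1,f_K)=0$ iff $P_K(\phi)=0$'' is false for it: Waldspurger/Gross applies to the eigenform attached to $f$. The way out (and what the paper does) is to keep the non-eigenform $\phi \equiv 1 \bmod \frakp$, observe that the minimal-length Deligne--Serre step lets you choose it as a sum of eigenforms $\phi_i$ all of whose eigenvalue systems are congruent to that of $E$, note $P^0(\phi) \equiv h_K \not\equiv 0 \bmod \frakp$, conclude some $P^0(\phi_i) \ne 0$, and apply Waldspurger to that component. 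Note also that with your normalization $e = (1/w_x)$ the identity ``$P_K(e)$ is essentially $h_K$'' fails (for $N=11$, $K=\Q(\sqrt{-23})$ one gets $1/3 + 2/2 = 4/3$ while $h_K = 3$); the clean statement uses the constant function, whose period is exactly $h_K$.

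The second gap is your claim that $M^0$ is identified under JL with the newforms having a \emph{prescribed} Atkin--Lehner sign at each $\ell \mid N_1$, so that $a_\ell(f) = 1$ automatically. This is not true: at a ramified prime the local component of a quaternionic eigenform of this level is an unramified character of $B_\ell^\times$, and both unramified quadratic twists of Steinberg occur, i.e.\ both signs $a_\ell(f) = \pm 1$ appear in $M^0$. To get the $q$-expansion congruence at $\ell \mid N$ you must include the ramified Hecke operators (translation by the local uniformizer, under which the constant function has eigenvalue $1$) in the congruence argument, which forces $a_\ell(f) \equiv 1 \bmod \frakp$ and hence $a_\ell(f) = 1$ for odd $p$ (for $p=2$ either sign is congruent to $1$). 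Relatedly, the passage from $p \mid \varphi(N_1)/12$ to $p \mid \varphi(N_1)/24$ is not about $w_\ell$-eigenspaces: it is needed only so that the constant term $\varphi(N_1)/24$ of $E_{2,N}$ vanishes mod $p$, making the congruence hold as $q$-expansions. Finally, your ``standard Mazur-style'' index step is plausible but is doing real work: the paper replaces it by an explicit construction, solving $\phi(\calI_i) = 1 + p a_i$ subject to $[\phi,\phi_0]=0$ (and the central-character condition) using the mass formula and a gcd argument, which is the part where $p \mid \num$ actually enters.
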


Here
$E_{2,N}$ is the Eisenstein series of weight 2 and level $N$ defined by
$E_{2,N}(z) = \sum_{d | N} \mu(d) d E_2(dz)$, where $\mu$ is the M\"obius function
and $E_2$ is the weight 2 Eisenstein series for $\mathrm{SL}(2,\Z)$ 
normalized so that the Fourier coefficient of $q$ is 1.

The statement about congruence away from $N$ means that $f$ and $E_{2,N}$ have
the same  Hecke eigenvalues mod $\frakp$ for $T_\ell$ for any prime $\ell \nmid N$.  
If $N=N_1$ is squarefree, then $E_{2,N}$ has constant term $\frac{\varphi(N)}{24}$, and
the only reason we need to assume $p | \frac{\varphi(N)}{24}$ rather than $p | \frac{\varphi(N)}{12}$ in the second part is so the constant term of $E_{2,N}$ will be $0 \mod p$.  E.g., if
$N=73$ there is a rational  newform $f \in S_2(73)$ all of whose Hecke eigenvalues
match with those of $E_{2,73}$ mod $2$, but the constant term of $E_{2,73}$ is
$1 \mod 2$ so they are not congruent as $q$-expansions.

Note $E_{2,N} = E_{2,N'}$
where $N' = \prod_{p | N} p$ is the ``powerfree part'' of $N$ so $E_{2, N}$ really has level $N'$, and 
$E_{2,N}$ is an eigenfunction of the Hecke operators $T_\ell$ on $M_2(N)$ 
for all primes $\ell$ having eigenvalue $\ell + 1$ when $\ell \nmid N$ and eigenvalue 1 when $\ell | N$.
One might prefer to replace $E_{2,N}$ with a form whose $T_\ell$ eigenvalue is 0
when $\ell^2 | N$ and look for a congruence mod $\ell$ as well.  
However, since we do not know the exact level of $f$ in general,
this does not make a difference for the statement of the
theorem.  Still, we give non-squarefree examples below where one can replace 
$E_{2,N}$ with a different
Eisenstein series to get a congruence of all Fourier coefficients.

The Hecke eigenvalue congruence in the $N=N_1$ squarefree case of 
\hyperlink{thma}{Theorem A}
was also proved by Ribet (unpublished, announced in 2010) under the
additional assumptions $p \nmid N$ and $p \ge 5$.  In fact, Ribet also obtained a
converse (there are no other such $p \nmid N$ with $p \ge 5$) and more generally addressed finding 
newforms congruent to $E_{2,N}$ away from $N$ with specified $p_i$-th Fourier coefficient 
$a_{p_i} \in \{ \pm 1 \}$ for each 
$p_i | N$.  This was studied further by Yoo in \cite{yoo:1}, which also explains Ribet's
work in detail.
(When $N=N_1$ is squarefree, our \hyperlink{thma}{Theorem A} only concerns finding
newforms with $a_{p_i} = 1$ for each $p_i | N$.)  
Their arguments involve studying Eisenstein ideals using Jacobian calculations 
and then appealing to the Jacquet--Langlands correspondence.
See also \cite{yoo:2} for related results when $N$ is not squarefree.

In contrast, our proof follows from using a (generalized) Eichler mass formula to 
construct a quaternionic cusp form $\phi$ ``of level $N$'' 
on the definite quaternion algebra $B$ ramified at each prime dividing $N_1$ such
that $\phi \equiv 1 \mod \frakp$.  (Here $\phi$ is just
a function on a certain finite set of ideal classes with the cuspidality
criterion amounting to a linear relation among the values of $\phi$.)
In addition to our method 
being more straightforward, we
have no trouble dealing with $N$ not squarefree and arbitrary $p$, or extending
this to Hilbert modular forms as in \cref{thm1} (though in the Hilbert modular
case we make a
simplyfing assumption that our totally real field $F$ satisfies $h_F = h_F^+$---see
\cref{rem:thm1}).  We do not get a converse or consider $a_{p_i}=-1$, though
see \cref{rem:converse} regarding the converse.

Here are some simple consequences of \hyperlink{thma}{Theorem A}.

\begin{corc} If $p > 2$, then there exists $f \in S_2(p^3)$ such that $f$ and $E_{2,p^3} = E_{2,p}$
are congruent$\mod \frakp$ away from $p$, for $\frakp$ a prime above $p$ in $\overline \Q$.  
\end{corc}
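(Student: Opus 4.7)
The plan is to deduce this directly by specializing \hyperlink{thma}{Theorem A} to $N = p^3$. I would set $N_1 = p^3$ and $N_2 = 1$ and verify the hypotheses: $N = p^3$ is nonsquare since the exponent $3$ is odd; $N_1$ has exactly one prime factor, occurring to the odd exponent $3$, so both parity conditions on $N_1$ hold; and $\gcd(N_1, N_2) = 1$ is immediate. Note that the alternative splitting with $N_1 = 1$ is not allowed, since $1$ has zero prime factors, which is even.

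Next I would check that $p$ divides the numerator of
\[
\tfrac{1}{12}\,\varphi(N_1)\, N_2 \prod_{q \mid N_2}(1 + q^{-1}) \;=\; \frac{\varphi(p^3)}{12} \;=\; \frac{p^2(p-1)}{12}.
\]
If $p \geq 5$ then $(p,12) = 1$, so in lowest terms the numerator of this fraction is divisible by $p^2$; if $p = 3$ the fraction equals $3/2$, whose numerator is $3 = p$. Hence the divisibility hypothesis of \hyperlink{thma}{Theorem A} is satisfied for every odd prime $p$, and the theorem produces an eigenform $f \in S_2(p^3)$ (for all $T_n$ with $(n,p) = 1$) that is congruent to $E_{2,p^3} \mod \frakp$ away from $p$. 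The identity $E_{2,p^3} = E_{2,p}$ follows immediately from $E_{2,N}(z) = \sum_{d \mid N} \mu(d)\, d\, E_2(dz)$ since $\mu(p^2) = \mu(p^3) = 0$, or equivalently from the remark that $E_{2,N} = E_{2,N'}$ for $N'$ the powerfree part of $N$. Since the corollary is a direct specialization of the main theorem, I expect no substantive obstacle beyond this small arithmetic verification.
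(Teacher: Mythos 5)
Your proposal is correct and is exactly the paper's (implicit) argument: the corollary is stated as a direct specialization of \hyperlink{thma}{Theorem A} with $N_1 = p^3$, $N_2 = 1$, where the quantity $\tfrac{1}{12}\varphi(p^3) = \tfrac{p^2(p-1)}{12}$ has numerator divisible by $p$ precisely because $p$ is odd (which is also why $p=2$ is excluded and handled separately with $N=32$). Your verification of the hypotheses, including the identity $E_{2,p^3}=E_{2,p}$ via the M\"obius support, matches the intended reasoning, so there is nothing further to add.
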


For example, when $p=3$, there is a unique normalized (new)form $f \in S_2(27)$, which has Fourier expansion
\[ \sum a_n q^n = q - 2q^{4} - q^{7} + 5q^{13} + 4q^{16} - 7q^{19} - 5q^{25} + 2q^{28} - 4q^{31} + 11q^{37} + 8q^{43} + \cdots \]
and it satisfies $a_\ell \equiv \ell+1 \mod 3$ for all primes $\ell \ne 3$.  In fact
we have $f(z) \equiv E_{2,3}(z) - E_{2,3}(3z) \mod 3$. (Note the constant term of the right
hand side is $0$.)

We remark that the analogous statement of the corollary is not true in level $p^2$---e.g., 
for $N=N_1 = 25$,
then prime $p=5$ divides the numerator of $\frac 1{12} \phi(N_1)$, but $S_2(25) = \{ 0 \}$. 
 
In the case of $p=2$, the theorem says that there exists $f \in S_2(32)$ which is congruent to $E_{2,2}$ mod $2$ away from $2$.  Here $S_2(32)$
is 1 dimensional, and spanned by the form
\[ \sum a_n q^n = q - 2q^{5} - 3q^{9} + 6q^{13} + 2q^{17} - q^{25} - 10q^{29} - 2q^{37} + 10q^{41} + 6q^{45} - 7q^{49} + \cdots. \]
In fact, $f(z) \equiv E_{2,2}(z) - E_{2,2}(2z) \mod 2$.

\begin{cord} Let $p, q$ be distinct primes with $p > 3$.  Then there exists $f \in S_2(qp^2)$
such that $f$ and $E_{2,q}$ are congruent $\mathrm{mod}$ $\frakp$ away from $pq$, for 
$\frakp$ a prime of $\overline \Q$ above $p$.
\end{cord}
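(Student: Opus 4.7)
The plan is to apply \hyperlink{thma}{Theorem A} directly with $N = qp^2$, factored as $N_1 = q$ and $N_2 = p^2$. First I would verify the setup: $N$ is nonsquare (since $q$ appears to the first power), $(N_1,N_2)=1$ since $p \ne q$, and $N_1 = q$ has exactly one prime factor (odd count) occurring to the odd exponent $1$. Then I would check the divisibility hypothesis: the relevant quantity is
\[
\tfrac{1}{12} \varphi(N_1) N_2 \prod_{\ell | N_2}(1 + \ell^{-1}) = \tfrac{1}{12}(q-1)\, p^2 (1 + p^{-1}) = \tfrac{(q-1)\, p(p+1)}{12}.
\]
Since $p > 3$, we have $\gcd(p,12) = 1$, and clearly $p \mid p(p+1)(q-1)$, so $p$ divides the numerator of this quantity. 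Hence \hyperlink{thma}{Theorem A} produces an eigenform $f \in S_2(qp^2)$ for all $T_n$ with $(n, qp^2) = 1$ which is congruent to $E_{2, qp^2} \mod \frakp$ away from $qp^2$.

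To finish, I would reconcile $E_{2,qp^2}$ with $E_{2,q}$ on the level of Hecke eigenvalues away from $pq$. By the definition $E_{2,N}(z) = \sum_{d \mid N} \mu(d)\, d\, E_2(dz)$ and the vanishing of $\mu$ on non-squarefree integers, only the divisors $d \in \{1, p, q, pq\}$ of $qp^2$ contribute, so $E_{2,qp^2} = E_{2,pq}$. As noted in the paragraph following \hyperlink{thma}{Theorem A}, $E_{2,M}$ has $T_\ell$-eigenvalue $\ell + 1$ for every prime $\ell \nmid M$. Therefore, for any prime $\ell \nmid pq$, both $E_{2,q}$ and $E_{2,pq}$ have $T_\ell$-eigenvalue $\ell + 1$, so they agree on Hecke eigenvalues away from $pq$. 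Combined with the congruence from \hyperlink{thma}{Theorem A}, this gives $f \equiv E_{2,q} \mod \frakp$ at every $T_\ell$ with $\ell \nmid pq$, as desired.

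There is essentially no serious obstacle here: the corollary is just a matter of checking that the data $(N_1, N_2) = (q, p^2)$ satisfies the hypotheses of \hyperlink{thma}{Theorem A} (where the assumption $p > 3$ is used only to clear the denominator $12$) and then observing that replacing $E_{2,qp^2}$ by $E_{2,q}$ is harmless when one only compares eigenvalues away from $pq$. The mildly subtle point worth emphasizing in the write-up is the identification $E_{2,qp^2} = E_{2,pq}$ and the fact that the congruence in \hyperlink{thma}{Theorem A} is only asserted away from $N = qp^2$, whose set of prime divisors coincides with that of $pq$.
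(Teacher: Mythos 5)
Your proposal is correct and is exactly the intended derivation: the decomposition $N_1=q$, $N_2=p^2$ is the only admissible one for $N=qp^2$, the quantity $\frac{1}{12}\varphi(q)p^2(1+p^{-1})=\frac{(q-1)p(p+1)}{12}$ has numerator divisible by $p$ precisely because $p>3$, and passing from $E_{2,qp^2}=E_{2,pq}$ to $E_{2,q}$ is harmless since one only compares $T_\ell$-eigenvalues for $\ell\nmid pq$. This matches the paper's treatment (it states the corollary as a direct consequence of Theorem A, illustrated by the $N=50$, $p=5$ example), so nothing further is needed.
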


For instance, $S_2(50) = S_2(50)^\mathrm{new}$ has dimension 2, and one eigenform 
$f \in S_2(50)$ has Fourier expansion
\[ q + q^{2} - q^{3} + q^{4} - q^{6} - 2q^{7} + q^{8} - 2q^{9} - 3q^{11} - q^{12} + 4q^{13} - 2q^{14} + q^{16} + 3q^{17} + \cdots \]
In this case it happens that $f$ and $E_{2,50}$ are congruent mod $5$ everywhere
away from 5, and $f(z) \equiv E_{2,10}(z) - E_{2,10}(5z) \mod 5$.

\medskip
The $L$-value result at the end of \hyperlink{thma}{Theorem A} is a direct 
generalization of \cite[Cor 11.8]{gross}, and follows from our proof of
\cref{thm1} and Waldspurger's formula \cite{wald}---see \cref{prop:nonv}.
The $p=2$ case
has consequences in the spirit of Goldfeld's conjecture:

\begin{exe} Let $C$ be the unique-up-to-isogeny elliptic curve over $\Q$ of conductor $N=17$.  
Then among negative prime discriminants $-d$, the 
central value $L(1, C_{-d})$ for the quadratic twist $C_{-d}$ is 
nonzero exactly $50\%$ of the time.  
\end{exe}

In fact, we can say precisely when $L(1,C_{-d}) \ne 0$ for $-d < 0$ a prime discriminant. 
We know $C$
is associated to the unique normalized form $f \in S_2(17)$.  We take $p=2$ 
in our theorem and conclude $L(1,f_K) \ne 0$ when $17$ is inert in 
$K=\Q(\sqrt{-d})$ and $2 \nmid h_K$.  However, for $d$ prime, Gauss' genus
theory implies $2 \nmid h_K$.  Thus, if $17$ is inert in $K$, 
$L(1,f_K) = L(1,C)L(1,C_{-d}) \ne 0$.
On the other hand, if $17$ is split in $\Q(\sqrt{-d})$, then $L(1,C_{-d})=0$ because the root 
number is $-1$.  In particular, at least 50\% of these prime quadratic twists $C_{-d}$ have 
finitely many rational points.  

\medskip
Now we discuss congruences of $L$-values.  Let $K/\Q$ be a quadratic extension and
$\chi$ an ideal class character of $K$.  Let $L(s,f,\chi)$ denote the twist $L(s, f_K \otimes \chi)$
of the base change $f_K$ of $f$ to $K$ (a degree 2 $L$-function over $K$), or equivalently
$L(s, f \times \theta_\chi)$ the degree 4 Rankin--Selberg $L$-function over $\Q$ of $f$ times
the $\GL(2)$ theta series $\theta_\chi$ attached to $\chi$.
  When $\chi=1$ is trivial, this $L$-function is just
$L(s,f,1) = L(s,f_K) = L(s, f) L(s, f \otimes \eta_K)$.

Given a congruence of modular forms, say as in \hyperlink{thma}{Theorem A}, we expect a congruence of algebraic parts of the special $L$-values $L(1, f, \chi)$.
In the case where one of the modular forms
is an Eisenstein series, this was studied by Vatsal and Heumann \cite{vatsal},
\cite{hv} (after Mazur \cite{mazur2} and Stevens \cite{stevens}) using modular symbols.

The Jacquet--Langlands correspondence, together with an explicit
$L$-value formula of earlier joint work with Whitehouse \cite{mw}, 
also yields a congruence of central $L$-values when one starts with
a congruence of \emph{quaternionic} modular forms.  We just consider the case
where one of these quaternionic modular forms is constant (i.e., corresponds to
$E_{2,N}$) but the same approach can also be used when one starts with two quaternionic
cusp forms which are congruent mod $\frakp$. 

\begin{thmb} \hypertarget{thmb}
Suppose $N$ is squarefree product of an odd number of primes 
and $f \in S_2(N)$ corresponds via Jacquet--Langlands to a 
quaternionic cusp form $\phi$ of level $N$ such that
$\phi \equiv 1 \mod \frakp^r$ for a prime $\frakp$ of $\overline \Q$ above $p$.
Then we may normalize $L^\alg(1, f, \chi)$ so that (i) it is an algebraic integer,
and (ii) there exist an algebraic integer $c_\phi$ invertible mod $\frakp$ such that,
for any imaginary quadratic field $K$ inert at each prime dividing $N$ and
any ideal class character $\chi$ of $K$, we have
\[ L^\alg(1, f, \chi) \equiv 
\begin{cases} 
|c_\phi|^2 h_K^2 \mod \frakp^r & \chi = 1,\\
0 \mod \frakp & \chi \ne 1.
\end{cases} \]
\end{thmb}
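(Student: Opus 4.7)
The plan is to deduce the theorem from the explicit toric-period formula of Martin--Whitehouse \cite{mw}, an integral refinement of Waldspurger's formula, applied to the quaternionic modular form $\phi$. Since $N$ is a squarefree product of an odd number of primes, the Jacquet--Langlands partner $\phi$ of $f$ lives on the definite quaternion algebra $B$ ramified exactly at the primes dividing $N$ (and at $\infty$), and may be viewed as a function on the finite class set $\Cl(\calO)$ of a maximal order $\calO \subset B$. The hypothesis that each prime $q \mid N$ is inert in $K$ guarantees an embedding $\iota : K \hookrightarrow B$ taking $\calO_K$ into $\calO$, so each $[\calI] \in \Cl(\calO_K)$ determines a class $\iota([\calI]) \in \Cl(\calO)$, and one forms the toric period
\[
P_\chi(\phi) \;:=\; \sum_{[\calI] \in \Cl(\calO_K)} \chi([\calI]) \, \phi\bigl(\iota([\calI])\bigr).
\]
The formula of \cite{mw} takes the form
\[
L^\alg(1, f, \chi) \;=\; c_\phi \cdot \bigl| P_\chi(\phi) \bigr|^2,
\]
where $c_\phi$ is an algebraic integer that depends on $\phi$ and $K$ but not on $\chi$. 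The asserted normalization of $L^{\alg}(1, f, \chi)$ is chosen so that $c_\phi$ is $\frakp$-integral and invertible mod $\frakp$; this is what (i) and the existence statement in (ii) come down to.

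Granted the formula, the rest is pure character theory. Because $\phi \equiv 1 \mod \frakp^r$ on every vertex of $\Cl(\calO)$, we obtain
\[
P_\chi(\phi) \;\equiv\; \sum_{[\calI] \in \Cl(\calO_K)} \chi([\calI]) \mod \frakp^r,
\]
which by orthogonality of ideal class characters equals $h_K$ when $\chi = 1$ and $0$ when $\chi \ne 1$. Squaring and multiplying by $c_\phi$ yields $L^\alg(1, f, 1) \equiv c_\phi \, h_K^2 \mod \frakp^r$ (one absorbs the unit into $|c_\phi|^2$ as in the statement), while for $\chi \ne 1$ the period vanishes mod $\frakp^r$, so $L^\alg(1, f, \chi) \equiv 0 \mod \frakp$ (in fact to a higher power, but the weaker conclusion suffices).

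The principal obstacle is not the character-orthogonality step but the careful invocation of \cite{mw}. One must verify: (a) that every local factor at primes dividing $N$ and at archimedean places is $\frakp$-integral and $\frakp$-invertible under the current inertness hypothesis (the combination of $N$ squarefree and $K$ inert at $N$ is what makes all local periods ``trivial'' in the relevant sense); (b) that the Petersson-norm-type denominator in Waldspurger's formula is likewise a $\frakp$-unit, which is the subtle point when $p \mid N$; and (c) that the $\chi$-dependence of the formula is packaged entirely inside $P_\chi(\phi)$, so that one and the same $c_\phi$ works for all $\chi$. Once these bookkeeping facts are checked, the theorem falls out immediately from the two displayed identities above.
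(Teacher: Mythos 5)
Your proposal is correct and follows essentially the same route as the paper: define $L^\alg(1,f,\chi)$ as (a $\frakp$-unit multiple of) the squared toric period $|P_\chi(\phi)|^2$, invoke the explicit Waldspurger-type formula of \cite{mw} to see this is a legitimate algebraic normalization whose $\chi$-dependence sits entirely in the period, and then deduce the congruence from $\phi \equiv 1 \mod \frakp^r$ together with orthogonality of ideal class characters. The only (cosmetic) difference is that the paper first rescales $\phi$ to have minimal content so that $L^\alg$ is pinned down intrinsically, with $c_\phi$ arising as the resulting congruence constant of the rescaled $\phi$, whereas you keep the given $\phi$ and absorb the unit into the formula; this also makes your verification items (a)--(b) unnecessary, since the theorem permits the normalization itself to absorb those $K$-dependent factors.
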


Given $f$, the normalization of $L^\alg(1,f,\chi)$ depends on $K$ and $\phi$ 
in a simple way, and does not depend on $\chi$---see \eqref{eq:Lalg-def}.  The
integer $c_\phi$ can also be determined in a simple way from $\phi$.

Note this $L$-value congruence can be viewed as a congruence between $L^\alg(1,f,\chi)$
and $L^\alg(1,E_{2,N},\chi)$, though we have not written it in this form. 
For instance, if $\chi$ is trivial, then
$L(1,E_{2,N},\chi)$ is essentially $L(0,\eta_K) L(1, \eta_K)$, which is essentially $h_K^2$.

Vatsal and Heumann \cite{vatsal}, \cite{hv} show that if $f$ is congruent to $E_{2,N}$
mod $\frakp^r$, $p > 2$ and $p \nmid N$, 
then there is a congruence of special values between algebraic parts of $L(1,f,\chi)$ and
$L(1,E_{2,N},\chi)$.
In fact they treat higher weights, other Eisenstein series and non-central special values (when $k \ge 4$).  However their normalization of algebraic parts
of special values makes use of canonical periods, which are only defined up
to $\frakp$-adic units, whereas our normalization determines $L^\alg(1,f,\chi)$ 
uniquely.  On the other hand,
 we do not require any restriction on $p$, and we can easily treat
 Hilbert modular forms also.
 
Our method is similar in spirit to Gross's approach \cite{gross},
and also an approach by Quattrini \cite{quattrini} using half-integral
weight modular forms.  (These works also make use of definite quaternion algebras.)
In fact we get a period congruence, which is stronger than the $L$-value congruence as
the period also accounts for the sign of the ``square root'' of $L^\alg(1,f_K)$.

The main deficiency in our result is that we require a congruence of \emph{quaternionic} modular forms, which is a priori stronger than a congruence of
elliptic (or Hilbert) modular forms.  However, we expect that these two notions of
congruence are equivalent (see 
\cref{rem:converse} and \eqref{eq:hyp}).  This issue is also present in Quattrini's approach,
who used work of Emerton \cite{emerton}
to show these notions are equivalent when $r=1$ and $N$ is prime 
\cite[Thm 3.6]{quattrini}.   Hence our  \hyperlink{thmb}{Theorem B} covers the case addressed
in \cite{mazur2}.
We can also show that, for 
$r=1$, under the hypotheses of \hyperlink{thma}{Theorem A}, if there is a unique cuspidal
newform $f \equiv E_{2,N} \mod \frakp$, then it corresponds to a quaternionic
form $\phi \equiv 1 \mod \frakp$.    The difficulty in general is separating quaternionic 
eigenforms with congruent eigenvalues.
 
\medskip
Now we outline the contents and briefly remark on other related literature.

In \cref{sec1}, we explain some preliminaries on weight 2 quaternionic modular forms for
totally definite quaternion algebras $B$ over totally real number fields $F$.  These
will be functions on the finite set of ideal classes $\Cl(\calO)$, for some order $\calO$
of $B$.

In \cref{sec2}, we use the Eichler mass formula to show the existence of a
quaternionic cusp form $\phi$ congruent to the constant function 1 modulo suitable primes.
Then we apply the Jacquet--Langlands correspondence to get a Hilbert cusp form 
$f$ corresponding to $\phi$ whose Hecke eigenvalues are congruent to those
of a Hilbert Eisenstein series associated
to the constant function on $\Cl(\calO)$.  This (\cref{thm1}) is the first main result, which specializes to the Hecke eigenvalue congruence statements 
in  \hyperlink{thma}{Theorem A}.

We note recent work
of Berger, Klosin and Kramer \cite{bkk} gives an algebraic approach to  
counting congruences of Hecke eigenvalues in more general settings, which yields
a refinement of Mazur's congruence of Fourier coefficients result
for $S_2(N)$ with $N$ is prime.  
In the case of Hilbert modular forms, some results are already known about
 finding primes of congruence between two \emph{cusp} forms, 
e.g., the work of Ghate \cite{ghate} generalizing a result of
Hida \cite{hida} for elliptic cusp forms, but we are not aware of results along the lines 
of \cref{thm1} guaranteeing the existence of Hilbert cusp forms congruent to 
Eisenstein series.

Then in \cref{sec3}, we use Waldspurger's formula \cite{wald} relating central $L$-values to periods on quaternion algebras.  This immediately gives the nonvanishing
$L$-value statement in  \hyperlink{thma}{Theorem A}.  To get the precise congruence
in  \hyperlink{thmb}{Theorem B}, we use a more explicit version of Waldspurger's
formula from \cite{mw}.
That $L$-value formula applies to
arbitrary quadratic extensions $K/F$ ($F$ totally real or not), but we need to restrict to $K/F$ CM here in order for the relevant period to lie on the definite quaternion algebra $B$.
Again, we are not aware of such $L$-value congruences in the Hilbert modular case.

On the other hand, there are some existence results of a different nature about even weight cusp 
forms  with $L^\alg(1, f, \chi) \not \equiv 0 \mod \frakp$ using average
value formulas, e.g., the aforementioned work of Michel and Ramakrishnan
\cite{michel-ramakrishnan} for elliptic cusp forms and joint work of File and Pitale with the present 
author \cite{fmp} for Hilbert cusp forms (though excluding parallel weight 2 for
simplicity).  These results are of just the form that
for a given $\frakp$ and any suitably large level  $\frakN$, there exists some
$f \in S_k(\frakN)$ such that $L^\alg(1, f, \chi) \not \equiv 0 \mod \frakp$, but 
the bound on the level \emph{depends on $K$}.  
I am not aware of any other results on the vanishing of $L^\alg(1, f, \chi)$ mod $\frakp$ for $\chi$ nontrivial when the sign of the functional equation is $+1$.

\medskip
This project grew out of several discussions with Dinakar Ramakrishnan about nonvanishing 
$L$-values, and I am
grateful to him for leading me to think about these things.  This work
was done in part while visiting Osaka City University with the support of a JSPS Invitation 
Fellowship (Long Term, L14518), and in part with support from a Simons Collaboration 
Grant.  I also thank Nicolas Billerey, Masataka Chida, Lassina Demb\'el\'e, Catherine Hsu, Ariel Pacetti and
Hwajong Yoo for helpful comments and directions to related literature.  I am especially grateful
to the referee for a prompt yet careful reading and finding a gap in an earlier version.

%

\section{Quaternionic modular forms} \label{sec1}

%


Throughout, we fix the following notation.  Let $F$ be a totally real number field of degree
$d$, and $\frako_F$ its ring of integers.
Let $\frakN=\frakN_1 \frakN_2$ be a nonzero integral ideal in $\frako_F$
such that $\frakN_1=\frakp_1^{e_1} \cdots \frakp_{r}^{e_{r}}$ and  
$\frakN_2 = \frakq_1^{f_1} \cdots \frakq_s^{f_s}$ where the $\frakp_i$'s and $\frakq_j$'s are 
distinct prime ideals of $\frako_F$, $e_i, f_j \in \N$, each $e_i$ is odd, and $r \equiv d \mod 2$.
Note that $\frakN$ can be any nonzero ideal when $d$ is even, and $\frakN$ can be any ideal
which is not a square when $d$ is odd.  The letter $\ell$ will denote a finite prime of $F$.

Denote by $B$ the unique (up to isomorphism) totally definite quaternion algebra over $F$ 
ramified at each $\frakp_i$ and no other finite prime.  
Let $\calO$ be an order of level $\frakN$ in $B$ such that $\calO_{\frakq_i}$
is conjugate to $R_0(\frakq_i^{f_i})$, the subring of $M_2(\frako_{F,\frakq_i})$ with lower
left entry $0$ mod $\frakq_i^{f_i}$, for $1 \le i \le s$.  Note this is an Eichler order if $\frakN_1$ is squarefree---otherwise
it is the intersection of an Eichler order of level $\prod \frakp_i \cdot \frakN_2$ with
 an order, which is unique up to isomorphism, of level $\frakN_1$.

We want to work with a certain space of automorphic forms on $B^\times$.  Adelically,
we will be looking at functions on
\begin{equation}
 \Cl(\calO) = B^\times \bs \hat B^\times / \hat \calO^\times = B^\times  \bs B^\times(\A_F) /
(\hat \calO^\times \times B^\times_\infty).
\end{equation}
Here $\Cl(\calO)$ can naturally be viewed as the (finite) set of invertible (locally principal)
 right ideal classes of $\calO$.  We denote the class number $\# \Cl(\calO)$ by $h(\calO)$.
Let  $\calI_1, \cdots, \calI_h$ be a set of representatives for the right ideal
classes of $\calO$, and let $x_1, \ldots, x_h$ be a corresponding set of representatives
for the double coset classes $B^\times \bs \hat{B}^\times / \hat \calO^\times$.

We consider the space of (weight 2, or weight 0, depending on convention)
quaternionic modular forms on $\calO$ defined by
\[ M(\calO) = \{ \phi : \Cl(\calO) \to \C \}, \]
a complex vector space of dimension $h(\calO)$.
To work adelically, we will typically view $\phi \in M(\calO)$ as a
function on $\hat B^\times$ which is left invariant by $B^\times$
and right invariant by $\hat \calO^\times$.
For simplicity, we will work with the subspace of forms with trivial central 
character:\footnote{In the published version, we incorrectly wrote
$\calM$ as the space of forms which are constant on the center,
rather than invariant.   That space is rather the span of $M(\calO, 1)$ 
together with forms in $M(\calO)$ which vanish on $x=1$.  This correction
is what forces the additional hypothesis of $p \nmid h_F$ in \cref{thm1}.}
\begin{equation} \label{eq:M-def}
 \calM =  M(\calO, 1) = \{ \phi \in M(\calO) : \phi(z x) = \phi(x) \text{ for } z \in \hat F^\times,
 \, x \in \Cl(\calO) \}.
\end{equation}
Note that $M(\calO, 1) = M(\calO)$ if $F$ has class number 1, because in this case
$\hat F^\times = F^\times \hat \frako_F^\times \subset B^\times \hat \calO^\times$.

We make $\calM$ into an inner product space
by defining
\[ (\phi, \phi') = \int_{\A_F^\times B^\times \bs B^\times(\A_F)} \phi(g) \overline{\phi'(g)} \, dg = (4\pi^2)^d \int_{\hat F^\times  B^\times \bs \hat B^\times} \phi(g) \overline{\phi'(g)} \, dg, \]
where  $dg$ denotes the Haar measure on the relevant quotient 
induced by the product of local
Tamagawa measures on $B^\times_v$ and $F^\times_v$ and the counting
measure on $B^\times$.  The inner product converges by compactness.  

We can write $\hat F^\times B^\times x_i \hat \calO^\times = \bigsqcup_z \hat B^\times z x_i \hat
\calO^\times$, where $z \in \hat F^\times/(\hat F^\times \cap B^\times x_i \hat \calO^\times x_i^{-1} )$.  The latter set is finite of size at most $h_F$ since
$F^\times \hat \frako_F^\times \subset \hat F^\times \cap B^\times x_i \hat \calO^\times x_i^{-1}$.    Denote its cardinality by $c_i$.

Since $\phi$ and
$\phi'$ are right invariant by 
$\hat \calO^\times$, we may write
\[ ( \phi, \phi' ) = \sum_i \omega_i^{-1} \phi(\calI_i) \overline{\phi'(\calI_i)}, \qquad \phi, \phi' \in M(\calO), \]
where $\phi(\calI_i) = \phi(x_i)$ and
\[  \omega_i = (4\pi^2)^{-d}
 \vol(\hat F^\times B^\times  \bs \hat F^\times B^\times x_i \hat \calO^\times)^{-1} c_i. \]
Note
\[  \vol(\hat F^\times B^\times  \bs \hat F^\times B^\times x_i \hat \calO^\times)
=  \vol(\hat F^\times B^\times  \bs \hat F^\times B^\times x_i \hat \calO^\times x_i^{-1})
= \vol(\hat \frako_F^\times B^\times \bs B^\times  x_i \hat \calO^\times x_i^{-1}) \frac{c_i}{h_F}. \]
Since $\calO_\ell(\calI_i) = x_i \hat \calO^\times  x_i^{-1} \cap B^\times$, we have
\begin{equation} 
\omega_i = \frac{[\calO_\ell(\calI_i)^\times:\frako_F^\times]}
{\vol(\hat \calO^\times/\hat \frako_F^\times)} \frac{h_F}{(4\pi^2)^d}.
\end{equation}

One can obtain the following Tamagawa volume computations from \cite{vigneras}.  

First, $\vol(\frako_{F_v}^\times) = \Delta_{F_v}^{-\frac 12}$ for $v < \infty$.


For finite $v \nmid \frakN$, we have
\[ \vol(\calO^\times_v / \frako_{F_v}^\times) = 
(1-q_v^{-2}) \vol(\frako_{F_v}^\times)^3 =
L(2, 1_{F_v})^{-1} \vol(\frako_{F_v}^\times)^3. \]

For $v = \frakp_i | \frakN_1$, we have
\[  \vol(\calO^\times_v / \frako_{F_v}^\times) = (1-q_v^{-1})^{-1} q_v^{-e_i} L(2, 1_{F_v})^{-1}\vol(\frako_{F_v}^\times)^3. \]

For $v = \frakq_j | \frakN_2$, we have that 
\[ [ \GL_2(\frako_{F,\frakq_j}) : R_0(\frakq_j^{f_j})^\times ] = q_v^{f_j}(1+q_v^{-1}), \]
and therefore
\[  \vol(\calO^\times_v / \frako_{F_v}^\times) =  q_v^{-f_j}(1+q_v^{-1})^{-1}L(2, 1_{F_v})^{-1} \vol(\frako_{F_v}^\times)^3. \]

Putting together the local measures gives 
\begin{equation}
 \vol(\hat \calO^\times/\hat \frako_F^\times) = \frac{\Delta_F^{-3/2}}{N(\frakN) \zeta_F(2)} 
\prod_{v | \frakN_1} \frac 1{1- q_v^{-1}} \prod_{v| \frakN_2} \frac 1{1+q_v^{-1}} .
\end{equation}

Note that computing $(\phi, \phi)$ for the constant function $\phi_0 = 1$ gives
\[ \sum \omega_i^{-1} = (\phi_0, \phi_0) = (2\pi)^{2d} \vol(\hat F^\times B^\times \bs \hat B^\times) = \vol(B^\times \A_F^\times \bs B^\times(\A_F)) = 2.  \]

If we put $w_i = [\calO_\ell(\calI_i)^\times :\frako_F^\times] = 
\frac{(2\pi)^{2d} \vol(\hat \calO^\times/\hat \frako_F^\times)}{h_F} \omega_i$ 
and define the 
normalized paring 
\begin{equation} \label{eq:normpair}
 [\phi, \phi'] = \frac{h_F}{(2\pi)^{2d} \vol(\hat \calO^\times/\hat \frako_F^\times)}
 (\phi, \phi') = \sum w_i^{-1} 
\phi(\calI_i) \overline{\phi'(\calI_i)}, \qquad \phi, \phi' \in M(\calO),
\end{equation}
 then we recover a generalized form of the usual Eichler mass formula,
\begin{align} \nonumber
m(\calO) = \sum w_i^{-1}  = [\phi_0, \phi_0] &= 2^{1-2d} \pi^{-2d} |\Delta_F|^{3/2} h_F
 \zeta_F(2)  N(\frakN) 
\prod_{v | \frakN_1} (1 - q_v^{-1}) \prod_{v| \frakN_2} ({1+q_v^{-1}})  \\
&= 2^{1-d} h_F |\zeta_F(-1)|  N(\frakN) 
\prod_{v | \frakN_1} (1-q_v^{-1}) \prod_{v| \frakN_2} ({1+q_v^{-1}}).
\label{eq:mass}
\end{align}
Here $N(\frakN)$ is the level (reduced norm) of $\frakN$.
The rational number $m(\calO)$ is called the mass of $\calO$.
When $F=\Q$, $\frakN = (N_1)$ and $\frakN_2 = (N_2)$, this simplifies to
\begin{equation}
m(\calO) = \sum w_i^{-1} =   \frac{\varphi(N_1)}{12}  N_2 \prod_{p | N_2}(1+p^{-1}).
\label{eq:massQ}
\end{equation}

Now we want to define the Eisenstein and cuspidal subspaces of $\calM$.
The Eisenstein space will be generated by the one-dimensional representations of
$\hat B^\times$, which all factor through the reduced norm map $N: \hat B^\times \to
\hat F^+$.  The reduced norm induces a surjective map $N: \Cl(\calO) \to \Cl^+(\frako_F)$
to the narrow class group of $F$.
Define the Eisenstein subspace $\calE$ of $\calM$ to be the subspace of all 
$\phi \in \calM$ which factor through $N$, and the cuspidal space $\calS$ to be the
orthogonal complement of $\calE$ in $\calM$.  Note for a character $\psi$ of 
$\Cl^+(\frako_F)$, we have  $\psi \circ N \in \calM$ if and only if $\psi^2 = 1$. 
Hence we can describe $\calS$ in terms of our normalized pairing \eqref{eq:normpair} 
by\footnote{This description of $\calS$ was corrected.}
\[ \calS = \{ \phi \in \calM : [\phi, \psi \circ N] = 0
\text{ for all characters } \psi: \Cl^+(\frako_F) \to \C \text{ s.t. } \psi^2 = 1 \}. \]

One has in the usual way Hecke operators $T_\ell$ for each prime $\ell \nmid \frakN$, 
which commute with each other and are self-adjoint with respect to the inner product.  
One can also describe the action on $\calM$ (or $M(\calO)$) in terms of 
(generalized) Brandt matrices.
Hence $\calM$,
and also $\calS$, has a basis consisting of eigenforms for each such $T_\ell$.  
Via the Jacquet--Langlands correspondence, each eigenform $\phi \in \calS$ transfers to an
eigenform $f \in S_2(\frakN)$, the space of parallel weight 2 Hilbert modular forms
of level $\frakN$, with the same Hecke eigenvalues away from $\frakN$.

Suppose now $\ell | \frakN_1$ but $\ell^2 \nmid \frakN_1$.  Then
one can define a Hecke operator by $(T_\ell \phi)(x) = \phi(x \varpi_{B_\ell})$ where
$\varpi_{B_\ell}$ is a uniformizer for  $B_\ell$.  This corresponds to the double coset
$\calO_\ell \varpi_{B_\ell} \calO_\ell = \varpi_{B_\ell} \calO_\ell$.  
Now one has bases for $\calM$ and $\calS$ which are eigenforms
 for all $T_\ell$ with $\ell \nmid \frakN_2$ and $\ell^2 \nmid \frakN_1$, and the Jacquet--Langlands lift of such an eigenform $\phi$
 will be a modular form $f \in S_2(\frakN)$ which, at all such $\ell$, is new and is
 an eigenform for $T_\ell$ with the same eigenvalue as $\phi$.  
 
 We refer the reader to \cite[Chap 2]{hida:book} or \cite{dembele-voight} for 
 details.\footnote{The standard references do not quite consider the
 orders we use, which were considered by Hijikata, Pizer and Shemanske.
 Consequently, one needs to check the behavior of conductors of local representations
 to guarantee that the Jacquet--Langlands correspondence
 associates to $\phi$ as above a Hilbert modular form $f$ whose level actually
 is $\frakN$. This is not hard, however, and the details may be found in \cite{me:basis}.}
 
%

\section{Congruence with Eisenstein series} \label{sec2}

%

We keep the notation of the previous section.  

For $f \in M_2(\frakN)$ (resp.\ $\phi \in \calM$) which is an 
eigenfunction of the Hecke operator $T_\ell$, let $\lambda_\ell(f)$ 
(resp.\ $\lambda_\ell(\phi)$) denote the corresponding eigenvalue.

Let $E$ be the normalized Eisenstein series on $M_2(\frakN)$
with Hecke eigenvalues $\lambda_{\ell}(E) = N(\ell) + 1$ when
$\ell \nmid \frakN$ and $\lambda_{\ell}(E) = 1$ when $\ell | \frakN$.  When $F=\Q$, this is 
the $E_{2,N}$ explicitly constructed in the introduction.  Recall $\phi_0 = 1 \in \calE$.
Then $\phi_0$ is an eigenform for the Hecke operators $T_\ell$ because the
Brandt matrices have constant row sums, and in fact
$\lambda_{\ell}(E) = \lambda_{\ell}(\phi_0)$  
when $\ell \nmid \frakN_2$ and $\ell^2 \nmid \frakN_1$.

The field of rationality of an eigenform $\phi \in \calM$ is the field $\Q(\phi)$ generated by 
its Hecke eigenvalues.  We may normalize $\phi$ so that all values of $\phi$ are integers
in $\Q(\phi)$, in which case we say $\phi$ is integral.
Let $\Q(\frakN)$ be the compositum of the fields of rationality $\Q(\phi)$ of all
eigenforms in $\phi \in \calM$.  Note this is a subfield of the compositum of all
$\Q(f)$ where $f$ ranges over eigenforms in $S_2(\frakN)$.  

If $\phi, \phi'$ are integral and $\frakp$ is an ideal of a suitable rationality field,
we write $\phi \equiv \phi' \mod \frakp$ if $\phi(\calI) \equiv \phi'(\calI) \mod \frakp$
for all $\calI \in \Cl(\calO)$.  If $\phi, \phi'$ are eigenforms, then $\phi(\calI) \equiv
\phi'(\calI) \mod \frakp$ implies their Hecke eigenvalues are also congruent mod $\frakp$ because the Hecke operators act by integral Brandt matrices.
We expect the converse to generally be true, though do not know how to show it.
See \cref{rem:converse} and \eqref{eq:hyp} below.

Denote by $h_F^+$ the narrow class number of $F$.

\begin{thm} \label{thm1} Assume $h_F^+ = h_F$ is odd.\footnote{The hypotheses
that $h_F^+$ is odd (which implies $h_F = h_F^+$) was added because of the corrected
description of $\calS$.}
Let $\num$ be the numerator of $m(\calO)$.  
Suppose $p | \num$ with $p \nmid h_F$\footnote{The condition $p \nmid h_F$
was added because of the corrected description of $\calM$.}
and $\frakp$ is a prime above $p$ in $\Q(\frakN)$.   Then there exists an eigenform $f \in S_{2}(\frakN)$ such that
$\lambda_\ell(f) \equiv \lambda_{\ell}(E) \mod \frakp$
 for all $\ell$ with $\ell \nmid \frakN_2$ and $\ell^2 \nmid \frakN_1$.  
 If $\frakN = \frakN_1$ is squarefree, we may take $f$ to be a newform.
 \end{thm}

\begin{proof} 

First we show there exists an integral $\phi \in \calS$ such that
$\phi \equiv 1 \mod p$.  Let us reindex our ideal representatives for $\Cl(\calO)$ in the
form $\calI_{ij}$, where for fixed $i$ the collection $\{\calI_{ij} \}_j$ is the set of ideal
representatives which differ from $\calI_{i1}$ by an element of $\Cl(\mathfrak o_F)$.
Consider $\phi \in M(\calO)$ integral with $\phi \equiv 1 \mod p$ so, for each 
$\calI_{ij} \in \Cl(\calO)$, $\phi(\calI_{ij}) = 1 + pa_{ij}$ for some $a_{ij} \in \Z$.   
Then, by \eqref{eq:M-def}, $\phi \in \calM$ just means: (i) $a_{ij} = a_{ik}$ for any valid
indices $ij$ and $ik$.  Moreover, since $h_F^+$ is odd, $\mathcal E = \C \phi_0$.  
Thus $\phi \in \calS$ if in addition $[\phi, \phi_0] = 0$, i.e., 
$\phi \in \calS$ if and only if (i) holds and we have
(ii) $m(\calO) = -p \sum_{ij} w_{ij}^{-1} a_{ij}$, where $w_{ij} =  [ \calO_\ell(\calI_{ij})^\times : 
\mathfrak o_F^\times]$. 

We claim there exist $a_i \in \Z$ such that these conditions hold.  
 (Note $p | \num$ implies $h > 1$ so necessarily $\calS \neq 0$.)
Let $w = \prod_{ij} w_{ij}$
and $w_{ij}^* = \frac{w}{w_{ij}}$.  We can rewrite (ii) as $\sum w_{ij}^* = -p \sum w_{ij}^* a_{ij}$.
To also account for (i), put $w_i' = \sum_j w_{ij}^*$ and  $b_i = a_{ij}$, and let $k$ be
the maximum value of the index $i$.
Then (i) and (ii) are achievable if and only if $\sum_{i=1}^k w_i' = -p \sum_{i=1}^k b_i w_i'$ 
is solvable  for $b_i \in \Z$.  
For this, it suffices to show $\gcd(w_1', \ldots, w_k')$ divides $p^{-1} \sum_{i=1}^k w_i'$, which is obvious at primes away from $p$.  
Say $p^j \parallel \gcd(w_1', \ldots, w_k')$.  
From the action of the $\Cl(\mathfrak o_F)$ on $\Cl(\calO)$,
we see that $w_{ij}^*$ is independent of the choice of $j$ and for each $i$ we have 
$w_i' = d_i w_{i1}^*$ for some $d_i | h_F$.  
Since we assumed $p \nmid h_F$, the $d_i$'s are prime to $p$, whence $p^j | w$.
Then $p | \num$ implies
$wp | \sum w_i' = w m(\calO)$, and therefore $p^{j+1} | \sum w_i'$.  This
proves the claim and gives us our desired $\phi$.

Now let $\Phi$ be the set of integral $\phi \in \calS$ which are congruent to a nonzero
multiple of $\phi_0$ mod $\frakp$. 
Fix a basis of eigenforms $\phi_1, \cdots, \phi_s$ of $\calS$.  Let $r$ be minimal such that, after a possible reordering of the $\phi_i$'s, there exists
$\phi \in \Phi$ with $\phi = c_1 \phi_1 + \cdots + c_r \phi_r$, $c_i \in \Q(\mathfrak N)$.  
Say $\phi \equiv c \phi_0
\mod \frakp$.  Then, for $\ell$ such that $\ell \nmid \frakN_2$ and $\ell^2 \nmid
\frakN_1$,
\[ [T_\ell - \lambda_\ell(\phi_j) ] \phi \equiv ( \lambda_\ell(\phi_0) - \lambda_\ell(\phi_j) ) c \phi_0 \mod
\frakp, \]
and thus $[T_\ell - \lambda_\ell(\phi_j) ] \phi \in \Phi$ unless $\lambda_\ell(\phi_0) \equiv \lambda_\ell(\phi_j)
\mod \frakp$.  (Note $[T_\ell - \lambda_\ell(\phi_j) ] \phi \in \Phi$ is also integral because
$T_\ell \phi$ is, so it makes sense to consider this mod $\frakp$.) 
However, $[T_\ell - \lambda_\ell(\phi_j) ] \phi$ is a linear combination of
$\phi_1, \ldots, \phi_{j-1}, \phi_{j+1}, \ldots, \phi_r$, which would contradict the minimality of $r$ if $[T_\ell - \lambda_\ell(\phi_j) ] \in \Phi$.  Hence 
$\lambda_\ell(\phi_0) \equiv \lambda_\ell(\phi_j) \mod \frakp$ for all $\ell$ as above 
and all $1 \le j \le r$.

Since $f$ has the same Hecke eigenvalues as $\phi$ for $T_\ell$ with $\ell$ as above,
 this yields the theorem.
\end{proof}

\begin{rem} \label{rem:thm1}

(a) The reason we assume $h_F^+ = h_F$ is to guarantee
the existence of $\phi \in \calS$ such that $\phi \equiv 1 \mod p$.  If
$h_F^+ \ne h_F$, one gets an additional linear constraint
$[\phi, \psi \circ N]=0$ on the $a_i$'s in the proof for each nontrivial character $\psi$ of 
$\Cl^+(\frako_F)/\Cl(\frako_F)$.
Then, at least a priori, one needs to place some conditions on the $w_i$'s, $m(\calO)$
and $p$ to guarantee the existence of $a_i \in \Z$ solving this system of 
$\Q$-linear equations.

(b) When $h_F^+ > 1$, there are other Eisenstein series in $M(\calO)$, and one 
might ask for congruences of these Eisenstein series as well.  However, such Eisenstein 
series which are also eigenforms can be obtained by 
twisting $\phi_0$ by narrow ideal class characters of $F$, 
and analogous congruences are just obtained by twisting both $E$ and $f$.
\end{rem}

We observe that there are often multiple choices for $B$ for a given $\frakN$.
For instance, if $F=\Q$ and $N=11 \cdot 13$, we get $\num = 5 \cdot 7$ if we take 
$N_1 = 11$, $N_2 = 13$ and $\num = 12$ if we take $N_1 = 13$, $N_2 = 11$.
Thus one gets congruences of cusp forms in level $11 \cdot 13$ 
with Eisenstein series modulo the primes $p = 2, 3, 5, 7$.

Some examples with $F=\Q$ were given in the introduction.
Here is a simple example with $F\neq \Q$.

\begin{ex} Let $F = \Q(\sqrt{5})$, which has narrow class number $1$.  
Suppose $\frakN = \frakN_2$ is one of the two prime ideals
with $N(\frakN) = 31$.  Then $\zeta_F(-1) = \frac 1{30}$ and $m(\calO) = \frac{32}{60} = \frac 8{15}$, 
so we take $p=2$.  Here
$S_2(\frakN)$ is one dimensional and has rationality field $\Q$.  
Then for nonzero $f \in S_2(\frakN)$ our theorem says that $\lambda_\ell(f) \equiv N(\ell) + 1 \mod 2$ 
for $\ell \ne \frakN$.  It is also true that $\lambda_\frakN (f) \equiv 1 \mod 2$.
Indeed, the first few nonzero Hecke 
eigenvalues  $\lambda_\ell(f)$ satisfy the values listed in the following table:

\begin{center}
\begin{tabular}{c|ccccccccccccc}
$N(\ell)$ & $4$ & $5$ & $9$ & $11$ & $11$ & $19$ & $19$ & $29$ & $29$ & $31$ & $31$ & $41$ & $41$ \\
\hline
$\lambda_\ell(f)$ & $-3$ & $-2$ & $2$ & $4$ & $-4$ & $-4$ & $4$ & $-2$ & $-2$ & $-1$ & $8$ & $-6$ & $-6$
\end{tabular}
\end{center}
\end{ex}

\begin{rem} \label{rem:converse}
Suppose $\frakN= \frakN_1$ is squarefree.  If $f \in S_2(\frakN)$
such that $f \equiv E \mod \frakp$, we expect (cf.\ \eqref{eq:hyp} below) that $f$ 
corresponds to a quaternionic
form $\phi \in \calS$ such that, after normalization, $\phi \equiv 1 \mod \frakp$.
Let $p$ be the rational prime below $\frakp$.
For such a cusp form $\phi$ to exist, we need $w[\phi, \phi_0] \equiv w[\phi_0, \phi_0]
\equiv 0 \mod \frakp$ where as before $w = \prod w_i$.  Thus if $(p, w) = 1$,
a congruence $f \equiv E \mod \frakp$ should only exist if $p | \num$ as in the theorem.  If $F=\Q$, then no prime $ > 3$ divides $w$, and this agrees with the
converse obtained by Ribet.

We also note that when $p | \num$, if there is a unique newform $f \in S_2(\frakN)$
such that $f \equiv E \mod \frakp$, the above proof implies $f$ corresponds to
a quaternionic form $\phi \in \calS$ with $\phi \equiv 1 \mod \frakp$, as we must
have $r=1$ in the final paragraph.  When $r=1$, $F=\Q$ and $N=\frakN$ is prime,
this is also true by \cite{quattrini}, \cite{emerton}.
\end{rem}

%

\section{Quadratic twist $L$-values} \label{sec3}

%

We keep our previous notation, but now assume $\calO$ is a maximal order of $B$, i.e.,
$\frakN = \frakN_1$ and $\frakN$ is squarefree.  For consistency and clarity, 
we will denote complete $L$-functions by $L^*(s, -)$ and incomplete (i.e., the finite part of) 
$L$-functions just by $L(s, -)$.  However, we follow the usual convention that automorphic 
$L$-functions are normalized
so that $s=1/2$ is the central point, whereas $L$-functions for $f \in S_2(\frakN)$
are normalized classically with $s=1$ the central point.

Let $K/F$ be a CM quadratic extension such that each $\frakp | \frakN$ is inert in $K$.  
This means $K$ embeds in $B$ and, since we
are now assuming $\calO$ is maximal, we may fix an embedding of $K$ into $B$
such that $\frako_K \subset \calO$. Hence, to each 
$t \in \Cl(\frako_K) = K^\times \bs \hat K^\times / \hat \frako_K^\times$ we can associate the
quaternionic ideal class $x(t) \in \Cl(\calO)$ via $x(t) = B^\times t \hat \calO^\times$.
Fix a character $\chi$ of $\Cl(\frako_K)$.  Put the product of local Tamagawa measures
on $\A_F^\times$ and $\A_K^\times$, and the counting measure on $K^\times$.
For $\phi \in \calM$, define the period
\[ P_\chi(\phi) = \int_{K^\times \A_F^\times \bs \A_K^\times} \phi(t) \overline{\chi(t)} \, dt = 
\frac{\vol(K^\times \A_F^\times \bs \A_K^\times)}{h_K}  \sum_{t \in \Cl(\frako_K)} \phi(x(t)) \overline{\chi(t)}, \]
where $dt$ is the quotient measure.
Let $\eta_K$ denote the quadratic idele class character of $F$ associated to $K/F$.
Then
\[  \frac{ \vol(K^\times \A_F^\times \bs \A_K^\times)}{h_K}
= \frac{2L^*(1,\eta_K)}{h_K} = \frac{2^{d+1} \sqrt{\Delta_F} }{w_K  h_F Q_{K/F} \sqrt{|\Delta_K|}}, \]
where $Q_{K/F} = [\frako_K^\times: \frako_F^\times \mu(K)] = 2^{d-1} \frac{R_F}{R_K}$,
the $R_*$ denoting a regulator.
We remark $Q_{K/F}$ is $1$ or $2$.
Consider the normalized period,
\begin{equation}
 P^0_\chi(\phi) = \sum_{t \in \Cl(\frako_K)} \phi(x(t)) \overline{\chi(t)} 
= \frac{w_K  h_F Q_{K/F} \sqrt{|\Delta_K|}}{2^{d+1}\sqrt{\Delta_F} } P_\chi(\phi).
\end{equation}

Let $\phi \in \calS$ be an eigenform.  
Then $\phi$ generates an irreducible cuspidal automorphic representation $\pi$ of 
$B^\times(\A_F)$ with trivial central character.  The period
$P_\chi$ extends to a linear functional on $\pi$ via the same defining integral.  
Suppose $P_\chi(\phi) \ne 0$ so $P_\chi$ is a nonzero functional. 
Then for each place $v$ of $F$, $\pi_v$ has a nonzero local $K_v^\times$-invariant linear
functional $\ell_v$.  It is well known that the functional $\ell_v$ is unique up to scaling,
and we normalize the $\ell_v$'s so that $P_\chi = \prod \ell_v$ (i.e., this 
factorization holds factorizable forms in $\pi$).

For each finite place $v$, Gross and Prasad
\cite{gross-prasad} defined a test vector $\phi_v \in \pi_v$ such that $\ell_v(\phi_v) \ne 0$.
In our setting, the Gross--Prasad test vector $\phi_v$ is the (unique up to scaling)
vector in $\pi_v$ fixed by $\calO_v^\times$.  (This is not true if $\calO$ is not maximal.)
This implies that $\phi = \prod \phi_v$ as a function on $B^\times(\A_F)$ where
$\phi_v$ is a suitably normalized (to guarantee convergence) Gross--Prasad test
vector for $v < \infty$ and $\phi_v = 1$ for $v | \infty$.

The central-value formula in \cite{mw}, refining Waldspurger's formula \cite{wald}, specializes here to
\begin{equation} \label{eq:mw}
\frac{|P_\chi(\phi)|^2}{(\phi, \phi)} = \frac {\zeta_F(2)}{2\pi^{2d}} \sqrt{\frac{\Delta_F}{|\Delta_K|}} 
\prod_{v | \frakN} (1-q_v^{-1})  
 \frac{L^*(1/2, \pi_K \otimes \chi)}{L^*(1,\pi, Ad)}.
\end{equation}
(Note \cite{mw} uses a slightly different choice of measure 
on $B^\times(\A)$ there---a formulation with the present choice of
measure is given in \cite{fmp}.)  
Observe the left hand side of this formula is invariant under scaling.
The $L$-functions for $\pi$ are the same as the $L$-functions for its 
Jacquet--Langlands transfer to $\GL(2)$.

We will use this formula to deduce a precise congruence result on $L$-values.
We remark that Zhang (e.g., \cite{zhang}), generalizing the work of Gross \cite{gross}, 
also obtained an explicit $L$-value formula in this 
setting.  It is formulated somewhat differently than \eqref{eq:mw}, but presumably 
Zhang's formula can be used in a similar manner.  Alternatively one could use
half-integral weight forms and the formula of Baruch--Mao \cite{BM} (generalizing
an earlier formula of Waldspurger), similar to the approach in \cite{quattrini} for $F=\Q$.

Unfortunately, we are not able to prove that a Hecke eigenvalue congruence implies
a congruence of $L$-values as \cite{vatsal}, \cite{hv} do over $\Q$.  
Our method requires starting with a congruence of quaternionic modular forms.  In
fact we generally expect the statement
\begin{equation} \label{eq:hyp}
\lambda_\ell(\phi) \equiv \lambda_\ell(\phi_0) \mod \frakp \text{ for all } \ell \implies
c \phi \equiv \phi_0 \mod \frakp \text{ for suitable } c
\end{equation}
to hold, but we do not know how to prove it.  As remark earlier, the converse direction
is true.  Recall from \cref{rem:converse} that at least \eqref{eq:hyp}
is true if there is a unique (up to scaling) eigenform $\phi \in \calS$
satisfying the above Hecke eigenvalue congruence, i.e.\
a unique cuspidal newform $f$ congruent to $E$).  This is also true if $F=\Q$
and $N=\frakN$ is prime (\cite[Thm 3.6]{quattrini}, relying on \cite{emerton}).

Our main evidence for believing \eqref{eq:hyp} holds more generally is that 
it seems necessary to obtain a congruence of $L$-values
 for newforms $f$ congruent to $E$ as in \cite{hv}.  This is because a congruence of 
 $L$-values would say that (up to constants) we have
 $|P^0_K(\phi)|^2 \equiv |P^0_K(\phi_0)|^2 \mod \frakp$ for infinitely many $K$.
 On the other hand, the map $\Cl(\frako_K) \to \Cl(\calO)$ appears to behave 
 essentially randomly, so an infinitude of such congruences would seem to happen
 with probability 0 if $c\phi \not \equiv \phi_0$.
 We remark that  the expectation
 \eqref{eq:hyp} relies on the specific action of the Hecke algebra on $\calM$---e.g., if 
 one were to replace the action of the Brandt matrices on $\calM$ by diagonal 
 matrices 
 the analogous statement fails---so  \eqref{eq:hyp}  may be
 a rather deep arithmetic statement  
 (indeed, the analysis in \cite{emerton} is not trivial).

\medskip
Before deducing our precise congruence result of $L$-values under a congruence of
quaternionic modular forms we observe that we can at least get some statement about
nonvanishing of $L$-values among forms congruent to Eisenstein series.  For this we can also
allow $K/F$ to be ramified at primes dividing the level.

\begin{prop} \label{prop:nonv}
We continue the hypotheses and notation of \cref{thm1}, 
with the further assumption that $\frakN = \frakN_1$ is squarefree.  Let
$K/F$ be a CM extension not split at each prime dividing $\frakN$.  If $p \nmid h_K$, then there exists a newform
$f \in S_2(\frakN)$ such that $\lambda_\ell(f) \equiv \lambda_\ell(E) \mod \frakp$
for all $\ell$ and $L(1,f_K) \ne 0$.
\end{prop}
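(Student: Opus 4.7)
My plan is to take the quaternionic form $\phi \in \calS$ constructed in the proof of \cref{thm1} and show that, by refining the minimality step there, one can produce an eigenform decomposition of $\phi$ in which some eigenform also has nonzero trivial-character period; the Jacquet--Langlands lift of that eigenform then satisfies $L(1,f_K)\ne 0$ by Waldspurger's theorem.

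The first step is a direct period computation. Since $\frakN=\frakN_1$ is squarefree, $\calO$ is maximal, and $K/F$ is CM and not split at primes dividing $\frakN$, so I may fix an embedding $\frako_K \hookrightarrow \calO$ (the not-split condition is exactly what lets $K$ embed in $B$ at each $\frakp|\frakN$). Since $\phi\equiv 1\pmod{\frakp}$, I compute
\[ P_1^0(\phi) \;=\; \sum_{t\in \Cl(\frako_K)} \phi(x(t)) \;\equiv\; \sum_{t} 1 \;=\; h_K \pmod{\frakp}, \]
which is nonzero by the hypothesis $p\nmid h_K$. In particular $P_1(\phi)\ne 0$.

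Next I would rerun the minimality argument from \cref{thm1}, but now minimizing over the set $\Phi^*$ consisting of integral $\psi\in \calS$ such that $\psi\equiv c\phi_0\pmod{\frakp}$ for some $c\not\equiv 0\pmod{\frakp}$ \emph{and} $P_1^0(\psi)\not\equiv 0\pmod{\frakp}$; by the previous paragraph $\phi\in \Phi^*$. Fixing an eigenform basis $\phi_1,\ldots,\phi_s$ of $\calS$, choose $\phi\in \Phi^*$ whose eigenform expansion $\phi=c_1\phi_1+\cdots+c_r\phi_r$ has minimal length $r$. If some $\phi_j$ had $\lambda_\ell(\phi_j)\not\equiv \lambda_\ell(\phi_0)\pmod{\frakp}$ for some $\ell$ (with $\ell\nmid \frakN_2$ and $\ell^2\nmid \frakN_1$, which here is any $\ell$), then $\phi':=(T_\ell-\lambda_\ell(\phi_j))\phi$ would be integral, of length at most $r-1$, and would satisfy
\[ \phi' \;\equiv\; c(\lambda_\ell(\phi_0)-\lambda_\ell(\phi_j))\,\phi_0 \pmod{\frakp}, \]
which forces $\phi'\equiv c'\phi_0\pmod{\frakp}$ with $c'\not\equiv 0$ and $P_1^0(\phi')\equiv c' h_K\not\equiv 0\pmod{\frakp}$ (by the same computation as above applied to $\phi'$). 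Hence $\phi'\in \Phi^*$, contradicting minimality. Therefore every $\phi_j$ appearing satisfies the Hecke-eigenvalue congruence $\lambda_\ell(\phi_j)\equiv \lambda_\ell(\phi_0)\pmod{\frakp}$ for all relevant $\ell$.

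Finally, since $P_1^0(\phi)=\sum c_j P_1^0(\phi_j)\ne 0$, at least one $\phi_i$ satisfies $P_1(\phi_i)\ne 0$, so $P_1$ is a nonzero $K^\times$-invariant functional on the representation $\pi_i$ generated by $\phi_i$. By Waldspurger's theorem \cite{wald} this gives $L^*(1/2,\pi_{i,K})\ne 0$, i.e.\ $L(1,f_K)\ne 0$, where $f\in S_2(\frakN)^{\mathrm{new}}$ is the Jacquet--Langlands lift of $\phi_i$; the newform assertion and the congruence $\lambda_\ell(f)\equiv \lambda_\ell(E)\pmod{\frakp}$ for all $\ell$ follow from the squarefree assumption and the last paragraph of \cref{sec1}. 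The main subtlety to navigate is arranging the Hecke-reduction step of the minimality argument to preserve both the mod-$\frakp$ congruence and the nonvanishing of $P_1^0$ simultaneously; both are controlled by the identity $\phi'\equiv(\lambda_\ell(\phi_0)-\lambda_\ell(\phi_j))\phi\pmod{\frakp}$, which is why incorporating the period condition into $\Phi^*$ from the outset makes the argument work uniformly.
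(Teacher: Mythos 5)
Your proposal is correct and is essentially the paper's own argument: construct $\phi\in\calS$ with $\phi\equiv 1\bmod\frakp$ as in the proof of \cref{thm1}, note $P^0_1(\phi)\equiv h_K\not\equiv 0\bmod\frakp$, decompose into eigenforms to find one with nonvanishing period, and conclude via Waldspurger and Jacquet--Langlands. The only difference is cosmetic: you build the period condition into the minimality set $\Phi^*$, but as you yourself observe it is automatically satisfied by any $\psi\equiv c\phi_0\bmod\frakp$ with $c\not\equiv 0$, so this adds nothing beyond the paper's direct use of the $\phi$ already produced in \cref{thm1}.
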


When $F=\Q$, $p > 2$, and $p \nmid N = \frakN$, 
one can deduce stronger nonvanishing results from \cref{thm1} using \cite{hv}.

\begin{proof} By the proof of \cref{thm1}, there exists an integral
$\phi \in \calS$ such that $\phi \equiv 1 \mod \frakp$ and $\phi = \sum \phi_i$ where
the $\phi_i$ are eigenforms with $T_\ell$-eigenvalues congruent to $\lambda_\ell(E) \mod \frakp$.  Now if $p \nmid h_K$, then
\[ P^0(\phi) \equiv \sum_{t \in \Cl(\frako_K)} 1 \equiv h_K \mod \frakp \]
implies $P^0(\phi) \ne 0$.  Since $P^0(\phi) = \sum P^0(\phi_i)$ at least one $P^0(\phi_i)$
is nonzero.  Hence, by Waldspurger's formula \cite{wald} (or the more refined \eqref{eq:mw}),
$L(1,f_{i,K}) \ne 0$ where $f_i \in S_2(\frakN)$ is the newform
corresponding to $\phi_i$ via Jacquet--Langlands.  (While we have only stated \eqref{eq:mw}
if each prime dividing $\frakN$ is inert in $K$, \cite{wald} and \cite{mw} allow these
primes to be ramified in $K$, with a suitable change in local factors.)
\end{proof}

From now on we will suppose that we have a newform 
$f \in S_2(\frakN)$ corresponds to
an integral eigenform $\phi \in \calS$ such that $\phi \equiv 1 \mod \frakp^r$
for some $r \ge 1$.
As remarked above, this holds for $r=1$ under the conditions of \cref{thm1}
if there is a unique cuspidal newform $f$ congruent to $E$ mod $\frakp$ or if
$F=\Q$ and $N=\frakN$ is prime.

Consider the Petersson norm on $S_2(\frakN)$ normalized so that
\[ (f,f) = 2^{1-2d} h_F \Delta_F^2 N(\frakN) L^*(1, \pi, Ad), \]
which corresponds to the definition of $(f,f)$ in \cite{hida:91} (cf.\ \cite[Thm 5.16]{GG}).  Here
 $\pi$ is the automorphic representation associated to $f$.  Then \eqref{eq:mw} becomes
\begin{equation}
\frac{|P_\chi(\phi)|^2}{(\phi, \phi)}
=  \frac{\zeta_F(2)\Delta_F^{5/2} h_F N(\frakN)}{(2\pi^2)^{2d} \sqrt{|\Delta_K|} }
\prod_{v | \frakN} (1-q_v^{-1})  
 \frac{L(1, f, \chi)}{(f,f)}.
\end{equation}

 
We normalize $\phi$ so that the gcd of all values of $\phi$ is minimal.
This specifies $\phi$ only up to a root of unity in $\Q(\phi)$, but
specifies both $|P_\chi(\phi)|^2$ and $(\phi, \phi)$ uniquely.  Now it need not
be that $\phi \equiv 1 \mod \frakp$, but we will have $\phi \equiv c_\phi \mod \frakp$
for some integer $c_\phi$ of $\Q(\phi)$ which is nonzero mod $\frakp$.
With this normalization, put
 \begin{equation} \label{eq:Lalg-def}
  L^\alg(1,f, \chi) = |P_\chi^0(\phi)|^2 = \frac{h_F^2 w_K^2 Q_{K/F}^2 \sqrt{|\Delta_K|}}
  {4(2\pi)^{2d}} [\phi, \phi] \frac{L(1,f, \chi)}{(f,f)},
 \end{equation}
  which is an algebraic integer since $P^0_\chi(\phi)$ is.  

Note that our algebraic special value $L^\alg(1,f, \chi)$ is normalized differently 
from what is typically found in the literature. 
For instance, when $F=\Q$, Shimura 
\cite{shimura} essentially considered the values
\[ A(1, f, \chi) = -\frac{g(\eta_K)}{(2\pi)^2 } \frac{L(1,f, \chi)}{ \langle f, f \rangle }, \]
where $g({ \cdot })$ denotes the Gauss sum and $\langle f, f \rangle = \frac 12 (f, f)$
is the usual Petersson norm.  In our case, with $F=\Q$, we see
\begin{equation} L^\alg(1,f, \chi) = - \frac{w_K^2}{8}   \frac  {\sqrt{|\Delta_K|}}{g(\eta_K)} [\phi, \phi] A(1, f, \chi),
\end{equation}
which, for fixed $f$, just depends on $K$ in a simple way (and not at all on $\chi$).
Note the factor $[\phi, \phi] = \sum w_i^{-1} |\phi(\calI_i)|^2$ is also algebraic.

\begin{thm} \label{thm2} Suppose $\frakN = \frakN_1$ is squarefree and $f \in S_2(\frakN)$
is a newform corresponding to a quaternionic form $\phi \in \calS$ such that $\phi \equiv 1 \mod \frakp^r$.
  Then there exists an algebraic integer $c_\phi \in \Q(\phi)$ which is nonzero
  mod $\frakp$ such that, for any quadratic
extension $K/F$ which is inert at each prime dividing $\frakN$ and
any ideal class character $\chi$ of $K$,
\[ L^{\alg}(1, f, \chi) \equiv  \delta_{\chi, 1} |c_\phi|^2 h_K^2 \mod \frakp^r, \]
where $\delta_{\chi,1}$ is $1$ if $\chi = 1$ and $0$ otherwise.
In particular, if $\frakp^r \nmid h_K^2$, then $L(1, f_K) \ne 0$.
\end{thm}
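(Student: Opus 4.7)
The plan is to unwind the definition \eqref{eq:Lalg-def} of $L^{\alg}(1,f,\chi)$ as $|P^0_\chi(\phi)|^2$ and exploit the fact that, after normalization, $\phi$ is essentially a constant modulo $\frakp^r$ on all of $\Cl(\calO)$. The whole argument is then just orthogonality of characters on $\Cl(\frako_K)$, with no appeal to deeper structure of $\pi$ beyond the $L$-value formula already recorded.

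First I would record the $c_\phi$ step: starting from $\phi \equiv 1 \mod \frakp^r$ and rescaling to make the values of $\phi$ have minimal common $\frako_{\Q(\phi)}$-ideal divisor, the resulting $\phi$ satisfies $\phi \equiv c_\phi \mod \frakp^r$ where $c_\phi$ is the rescaling factor. Because the original $\phi$ already had some value $\equiv 1 \mod \frakp$, the $\frakp$-part of the gcd of its values is trivial, which forces $c_\phi$ to be a $\frakp$-adic unit—this is the promised algebraic integer invertible mod $\frakp$.

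Next I would plug $\phi(x(t)) \equiv c_\phi \mod \frakp^r$ into the defining sum
\[ P^0_\chi(\phi) \,=\, \sum_{t \in \Cl(\frako_K)} \phi(x(t)) \overline{\chi(t)} \,\equiv\, c_\phi \sum_{t \in \Cl(\frako_K)} \overline{\chi(t)} \mod \frakp^r. \]
Character orthogonality on the finite abelian group $\Cl(\frako_K)$ gives $\sum_t \overline{\chi(t)} = h_K$ if $\chi = 1$ and $0$ otherwise, hence $P^0_\chi(\phi) \equiv c_\phi h_K \mod \frakp^r$ for $\chi = 1$ and $P^0_\chi(\phi) \equiv 0 \mod \frakp^r$ for $\chi \ne 1$. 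Squaring via $|z|^2 = z \bar z$ then gives $L^{\alg}(1,f,\chi) = |P^0_\chi(\phi)|^2 \equiv |c_\phi|^2 h_K^2 \mod \frakp^r$ when $\chi = 1$, while for $\chi \ne 1$ the congruence $P^0_\chi(\phi) \equiv 0 \mod \frakp^r$ forces $|P^0_\chi(\phi)|^2 \equiv 0 \mod \frakp$ (the weaker conclusion claimed in the statement). The final nonvanishing assertion is then immediate: if $\frakp^r \nmid h_K^2$, then $|c_\phi|^2 h_K^2 \not\equiv 0 \mod \frakp^r$ since $c_\phi$ is a $\frakp$-unit, so $L^{\alg}(1,f,1)$, and hence $L(1,f_K) = L(1,f,1)$, is nonzero.

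The step I expect to need the most care is not the main computation but the normalization/squaring bookkeeping: namely, justifying that the minimal-gcd normalization in the Dedekind ring $\calO_{\Q(\phi)}$ really produces $c_\phi$ as a global algebraic integer that is a $\frakp$-unit, and tracking the precise modulus when passing from $P^0_\chi(\phi)$ to $|P^0_\chi(\phi)|^2$ via complex conjugation (which need not fix $\frakp$). For $\chi = 1$ this is fine because $P^0_1(\phi)$ lies in the number field $\Q(\phi)$ and the congruence can be squared in the localization at $\frakp$; for $\chi \ne 1$, one only needs the weaker divisibility by $\frakp$ claimed in the theorem, which is automatic from $\frakp^r \mid P^0_\chi(\phi)$. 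No further input beyond the Gross--Prasad/Waldspurger setup already in place is required.
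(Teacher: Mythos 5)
Your argument is correct and is essentially the paper's own proof: after the minimal-gcd normalization one has $\phi \equiv c_\phi \mod \frakp^r$, hence $P^0_\chi(\phi) \equiv c_\phi P^0_\chi(\phi_0) \mod \frakp^r$ with $P^0_\chi(\phi_0) = h_K$ or $0$ by character orthogonality, and one then applies \eqref{eq:Lalg-def}. One small point: for $\chi \ne 1$ the theorem claims $L^\alg(1,f,\chi) \equiv 0 \mod \frakp^r$ (not merely mod $\frakp$), but this already follows from your own observation, since $\frakp^r \mid P^0_\chi(\phi)$ and $\overline{P^0_\chi(\phi)}$ is an algebraic integer --- indeed it equals $P^0_{\chi^{-1}}(\phi)$ because $\Q(\phi)$ is totally real, so the product is even divisible by $\frakp^{2r}$.
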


We do not require $\frakp | \num$ as in \cref{thm1}.
One could also allow $K$ to be ramified at primes dividing $\frakN_1$, but
then one needs to exclude the $L$-factors at such primes (cf.\ \cite{mw}).

Moreover, the number $c_\phi$ can be read off directly from
$\phi$ (and can be chosen independent of $\frakp$), which can be determined from Brandt matrices.  The calculations of $c_\phi$ and $[\phi, \phi]$ (and thus the
normalization of $L^\alg(1,f,\chi)$)
may be simpler than the calculation of canonical periods arising in, e.g., 
\cite{hv}.

\begin{proof}
With $\phi$ normalized as above, we get
$P^0_\chi(\phi) \equiv c_\phi P^0_\chi(\phi_0) \mod \frakp^r$, and $P^0_\chi(\phi_0)$ is either $h_K$ or $0$ according to whether $\chi$ is 
trivial or not.  Now apply \eqref{eq:Lalg-def}.
\end{proof}

Finally, we briefly illustrate how one can use $L$-values to recover 
information about the map $\Cl(\frako_K) \to \Cl(\calO)$, and use this to give
an example where $L(1/2, f_K)^\alg \not \equiv 0 \mod p$ and $L(1/2, f, \chi) \ne 0$
but $L(1/2, f, \chi) \equiv 0 \mod p$ for a nontrivial ideal class character $\chi$ of $K$.

\begin{ex} Suppose $F=\Q$, $N=11$ and $p=5$.  Then 
the quaternion algebra $B$ ramified
at $11$ and $\infty$ has class number $2$.    Write $\Cl(\calO) = \{ x_1, x_2 \}$.  Then, up to reordering, $w_1 = 3$ and
$w_2 = 2$.  Here $\dim \calS = 1$, and we can define $\phi \in \calS$
by  $\phi(x_1) = 3$ and $\phi(x_2) = -2$.  This is normalized as we specified above,
and $[\phi, \phi] = 5$ and we can take $c_\phi = 3$.

Let $K=\Q(\sqrt{-23})$, which has class number 3,
and $\calO$ be a maximal order of $B$ containing 
$\frako_K$.
Our theorem says that 
$L^\alg(1, f_K) \equiv -h_K^2 \equiv 1 \mod 5$, where $f \in S_2(11)$ is the unique
normalized cusp form.  Indeed, one can compute $L(1/2, f_K)^\alg = 1$.
On the other hand, the normalized period for $\chi=1$ is 
$P^0_1(\phi) = \sum_{t \in \Cl(\frako_K)} \phi(x(t)) = 5a - 6$, 
where $0 \le a \le 3$ is the number of classes of $\frako_K$ mapping to $x_1$.
Since $|P^0_1(\phi)|^2 = 1$, we deduce $a=1$.   
Hence $x(t_1) = x_1$ and $x(t_2) = x(t_3) = x_2$ for some ordering $t_1, t_2, t_3$
of the ideal classes of $\frako_K$, i.e., the map $\Cl(\frako_K) \to \Cl(\calO)$ is surjective
with fibers of size $1$ and $2$ over $x_1$ and $x_2$, respectively.

Now let $\chi$ be one of the nontrivial idele class characters.  Then the
$\zeta_i :=\chi(t_i)$'s are the distinct 3rd roots of unity in some order, and 
$P^0_\chi(\phi) = 3 \zeta_1 -2 \zeta_2 - 2 \zeta_3 = 5 \zeta_1$.  Hence we see
$L^\alg(1, f, \chi)$ is nonzero, but $0 \text{ \rm{mod} } 5$.
\end{ex}

We remark that with $f$ as in the previous example, 
one can use the values of $\phi$ to conclude
$L^\alg(1, f_K) \in \{ (5a-2h_K)^2 : 0 \le a \le h_K \}$ for any imaginary quadratic $K$ 
unramified  at 11.

\begin{rem}
One can similarly use this method to prove that congruences of quaternionic
cusp forms yield congruences of $L$-values.  That is, if $\phi_1$ and $\phi_2$ are integral
quaternionic eigenforms such that $\phi_1 \equiv \phi_2 \mod \frakp^r$, then with a
suitable definition of algebraic $L$-values, one will have $L^\alg(1,f_1,\chi) \equiv
L^\alg(1, f_2, \chi) \mod \frakp^r$.  See \cite{DK} for an approach using half-integral
weight forms to this situation when $F=\Q$.
\end{rem}

\begin{bibdiv}
\begin{biblist}*{labels={alphabetic}}

\bib{BM}{article}{
   author={Baruch, Ehud Moshe},
   author={Mao, Zhengyu},
   title={Central value of automorphic $L$-functions},
   journal={Geom. Funct. Anal.},
   volume={17},
   date={2007},
   number={2},
   pages={333--384},
   issn={1016-443X},
}

\bib{bkk}{article}{
   author={Berger, Tobias},
   author={Klosin, Krzysztof},
   author={Kramer, Kenneth},
   title={On higher congruences between automorphic forms},
   journal={Math. Res. Lett.},
   volume={21},
   date={2014},
   number={1},
   pages={71--82},
   issn={1073-2780},
}

\bib{dembele-voight}{article}{
   author={Demb{\'e}l{\'e}, Lassina},
   author={Voight, John},
   title={Explicit methods for Hilbert modular forms},
   conference={
      title={Elliptic curves, Hilbert modular forms and Galois deformations},
   },
   book={
      series={Adv. Courses Math. CRM Barcelona},
      publisher={Birkh\"auser/Springer, Basel},
   },
   date={2013},
   pages={135--198},
}

\bib{DK}{article}{
   author={Dummigan, Neil},
   author={Krishnamoorthy, Srilakshmi},
   title={Lifting congruences to weight 3/2},
   journal={J. Ramanujan Math. Soc.},
   volume={32},
   date={2017},
   number={4},
   pages={431--440},
   issn={0970-1249},
}

\bib{emerton}{article}{
   author={Emerton, Matthew},
   title={Supersingular elliptic curves, theta series and weight two modular
   forms},
   journal={J. Amer. Math. Soc.},
   volume={15},
   date={2002},
   number={3},
   pages={671--714 (electronic)},
   issn={0894-0347},
}

\bib{fmp}{article}{
   author={File, Daniel},
   author={Martin, Kimball},
   author={Pitale, Ameya},
   title={Test vectors and central $L$-values for ${\rm GL}(2)$},
   journal={Algebra Number Theory},
   volume={11},
   date={2017},
   number={2},
   pages={253--318},
   issn={1937-0652},
}

\bib{GG}{book}{
   author={Getz, Jayce},
   author={Goresky, Mark},
   title={Hilbert modular forms with coefficients in intersection homology and
quadratic base change},
   series={Progress in Mathematics},
   volume={298},
   publisher={Birkh\"auser/Springer Basel AG, Basel},
   date={2012},
   pages={xiv+256},
   isbn={978-3-0348-0350-2},
}

\bib{ghate}{article}{
   author={Ghate, Eknath},
   title={Adjoint $L$-values and primes of congruence for Hilbert modular
   forms},
   journal={Compositio Math.},
   volume={132},
   date={2002},
   number={3},
   pages={243--281},
   issn={0010-437X},
}

\bib{gross}{article}{
   author={Gross, Benedict H.},
   title={Heights and the special values of $L$-series},
   conference={
      title={Number theory},
      address={Montreal, Que.},
      date={1985},
   },
   book={
      series={CMS Conf. Proc.},
      volume={7},
      publisher={Amer. Math. Soc., Providence, RI},
   },
   date={1987},
   pages={115--187},
}

\bib{gross-prasad}{article}{
   author={Gross, Benedict H.},
   author={Prasad, Dipendra},
   title={Test vectors for linear forms},
   journal={Math. Ann.},
   volume={291},
   date={1991},
   number={2},
   pages={343--355},
   issn={0025-5831},
}

\bib{hv}{article}{
   author={Heumann, Jay},
   author={Vatsal, Vinayak},
   title={Modular symbols, Eisenstein series, and congruences},
   journal={J. Th\'eor. Nombres Bordeaux},
   volume={26},
   date={2014},
   number={3},
   pages={709--757},
   issn={1246-7405},
}

\bib{hida}{article}{
   author={Hida, Haruzo},
   title={Congruence of cusp forms and special values of their zeta
   functions},
   journal={Invent. Math.},
   volume={63},
   date={1981},
   number={2},
   pages={225--261},
   issn={0020-9910},
}

\bib{hida:91}{article}{
   author={Hida, Haruzo},
   title={On $p$-adic $L$-functions of ${\rm GL}(2)\times {\rm GL}(2)$ over
totally real fields},
   language={English, with French summary},
   journal={Ann. Inst. Fourier (Grenoble)},
   volume={41},
   date={1991},
   number={2},
   pages={311--391},
   issn={0373-0956},
}

\bib{hida:book}{book}{
   author={Hida, Haruzo},
   title={Hilbert modular forms and Iwasawa theory},
   series={Oxford Mathematical Monographs},
   publisher={The Clarendon Press, Oxford University Press, Oxford},
   date={2006},
   pages={xiv+402},
   isbn={978-0-19-857102-5},
   isbn={0-19-857102-X},
}

\bib{me:basis}{unpublished}{
      author={Martin, Kimball},
   title={The basis problem revisited},
   note={arXiv:1804.04234},
}

\bib{mw}{article}{
   author={Martin, Kimball},
   author={Whitehouse, David},
   title={Central $L$-values and toric periods for ${\rm GL}(2)$},
   journal={Int. Math. Res. Not. IMRN},
   date={2009},
   number={1},
   pages={141--191},
   issn={1073-7928},
}

\bib{mazur1}{article}{
   author={Mazur, B.},
   title={Modular curves and the Eisenstein ideal},
   journal={Inst. Hautes \'Etudes Sci. Publ. Math.},
   number={47},
   date={1977},
   pages={33--186 (1978)},
   issn={0073-8301},
}

\bib{mazur2}{article}{
   author={Mazur, B.},
   title={On the arithmetic of special values of $L$ functions},
   journal={Invent. Math.},
   volume={55},
   date={1979},
   number={3},
   pages={207--240},
   issn={0020-9910},
}

\bib{michel-ramakrishnan}{article}{
   author={Michel, Philippe},
   author={Ramakrishnan, Dinakar},
   title={Consequences of the Gross-Zagier formulae: stability of average $L$-values, subconvexity, and non-vanishing mod $p$},
   conference={
      title={Number theory, analysis and geometry},
   },
   book={
      publisher={Springer, New York},
   },
   date={2012},
   pages={437--459},
}

\bib{quattrini}{article}{
   author={Quattrini, Patricia L.},
   title={The effect of torsion on the distribution of Sh among quadratic
   twists of an elliptic curve},
   journal={J. Number Theory},
   volume={131},
   date={2011},
   number={2},
   pages={195--211},
   issn={0022-314X},
}

\bib{shimura}{article}{
   author={Shimura, Goro},
   title={The special values of the zeta functions associated with cusp
   forms},
   journal={Comm. Pure Appl. Math.},
   volume={29},
   date={1976},
   number={6},
   pages={783--804},
   issn={0010-3640},
}

\bib{stevens}{book}{
   author={Stevens, Glenn},
   title={Arithmetic on modular curves},
   series={Progress in Mathematics},
   volume={20},
   publisher={Birkh\"auser Boston, Inc., Boston, MA},
   date={1982},
   pages={xvii+214},
   isbn={3-7643-3088-0},
}

\bib{vatsal}{article}{
   author={Vatsal, V.},
   title={Canonical periods and congruence formulae},
   journal={Duke Math. J.},
   volume={98},
   date={1999},
   number={2},
   pages={397--419},
   issn={0012-7094},
}

\bib{vigneras}{book}{
   author={Vign{\'e}ras, Marie-France},
   title={Arithm\'etique des alg\`ebres de quaternions},
   series={Lecture Notes in Mathematics},
   volume={800},
   publisher={Springer, Berlin},
   date={1980},
   pages={vii+169},
   isbn={3-540-09983-2},
}

\bib{wald}{article}{
   author={Waldspurger, J.-L.},
   title={Sur les valeurs de certaines fonctions $L$ automorphes en leur
   centre de sym\'etrie},
   journal={Compositio Math.},
   volume={54},
   date={1985},
   number={2},
   pages={173--242},
   issn={0010-437X},
}

\bib{yoo:1}{unpublished}{
   author={Yoo, Hwajong},
   title={Non-optimal levels of a reducible mod $\ell$ modular representation},
   note={arXiv:1409.8342v3},
}

\bib{yoo:2}{unpublished}{
   author={Yoo, Hwajong},
   title={Rational {E}isenstein primes and the rational cuspidal groups of modular {J}acobian varieties},
   note={arXiv:1510.0301v3},
}

\bib{zhang}{article}{
   author={Zhang, Shou-Wu},
   title={Gross-Zagier formula for $\rm GL(2)$. II},
   conference={
      title={Heegner points and Rankin $L$-series},
   },
   book={
      series={Math. Sci. Res. Inst. Publ.},
      volume={49},
      publisher={Cambridge Univ. Press, Cambridge},
   },
   date={2004},
   pages={191--214},
}

\end{biblist}
\end{bibdiv}

\end{document}